\documentclass[12pt]{amsart}

\usepackage{amsmath,amssymb,amsbsy,amsfonts,amsthm,latexsym,
                        amsopn,amstext,amsxtra,euscript,amscd,mathrsfs,color,bm,cite}
                       
\usepackage{float} 
\usepackage[english]{babel}
\usepackage{mathtools}
\usepackage{todonotes}
\usepackage{url}
\usepackage[colorlinks,linkcolor=blue,anchorcolor=blue,citecolor=blue,backref=page]{hyperref}
\RequirePackage{mathrsfs} \let\mathcal\mathscr

\usepackage{enumitem}

\def\le{\leqslant}
\def\ge{\geqslant}

\usepackage{mathtools}
\usepackage{todonotes}
\usepackage[norefs,nocites]{refcheck}

\usepackage[english]{babel}
\usepackage{mathtools}
\usepackage{todonotes}
\usepackage{url}
\usepackage[colorlinks,linkcolor=blue,anchorcolor=blue,citecolor=blue,backref=page]{hyperref}

\usepackage{xcolor}

\begin{document}

\newtheorem{theorem}{Theorem}
\newtheorem{lemma}[theorem]{Lemma}
\newtheorem{claim}[theorem]{Claim}
\newtheorem{cor}[theorem]{Corollary}
\newtheorem{prop}[theorem]{Proposition}
\newtheorem{definition}{Definition}
\newtheorem{question}[theorem]{Question}
\newtheorem{conj}[theorem]{Conjecture}
\newcommand{\hh}{{{\mathrm h}}}

\numberwithin{equation}{section}
\numberwithin{theorem}{section}
\numberwithin{table}{section}

\numberwithin{figure}{section}

\def\dsum{\mathop{\sum\!\sum}}
\def\sssum{\mathop{\sum\!\sum\!\sum}}
\def\ssum{\mathop{\sum\ldots \sum}}
\def\iint{\mathop{\int\ldots \int}}


\newfont{\teneufm}{eufm10}
\newfont{\seveneufm}{eufm7}
\newfont{\fiveeufm}{eufm5}
%
%
\newfam\eufmfam
     \textfont\eufmfam=\teneufm
\scriptfont\eufmfam=\seveneufm
     \scriptscriptfont\eufmfam=\fiveeufm
%
%
\def\frak#1{{\fam\eufmfam\relax#1}}

\newcommand{\bflambda}{{\boldsymbol{\lambda}}}
\newcommand{\bfmu}{{\boldsymbol{\mu}}}
\newcommand{\bfxi}{{\boldsymbol{\xi}}}
\newcommand{\bfrho}{{\boldsymbol{\rho}}}

\def\fA{{\mathfrak A}}
\def\fB{{\mathfrak B}}
\def\fC{{\mathfrak C}}
\def\fI{{\mathfrak I}}
\def\fJ{{\mathfrak J}}
\def\fK{{\mathfrak K}}
\def\fM{{\mathfrak M}}
\def\fS{{\mathfrak S}}
 \def\fW{{\mathfrak W}}
  \def\fU{{\mathfrak U}}

\def \balpha{\bm{\alpha}}
\def \bbeta{\bm{\beta}}
\def \bgamma{\bm{\gamma}}
\def \blambda{\bm{\lambda}}
\def \bchi{\bm{\chi}}
\def \bphi{\bm{\varphi}}
\def \bpsi{\bm{\psi}}
\def \bomega{\bm{\omega}}
\def \btheta{\bm{\vartheta}}

\def \bzeta{\bm{\zeta}}
\def \bxi{\bm{\xi}}

\def\eqref#1{(\ref{#1})}

\def\vec#1{\mathbf{#1}}
\def \vn{ \vec{n}}
\def \vh{ \vec{h}}
\def \vt{ \vec{t}}


\def\cA{{\mathcal A}}
\def\cB{{\mathcal B}}
\def\cC{{\mathcal C}}
\def\cD{{\mathcal D}}
\def\cE{{\mathcal E}}
\def\cF{{\mathcal F}}
\def\cG{{\mathcal G}}
\def\cH{{\mathcal H}}
\def\cI{{\mathcal I}}
\def\cJ{{\mathcal J}}
\def\cK{{\mathcal K}}
\def\cL{{\mathcal L}}
\def\cM{{\mathcal M}}
\def\cN{{\mathcal N}}
\def\cO{{\mathcal O}}
\def\cP{{\mathcal P}}
\def\cQ{{\mathcal Q}}
\def\cR{{\mathcal R}}
\def\cS{{\mathcal S}}
\def\cT{{\mathcal T}}
\def\cU{{\mathcal U}}
\def\cV{{\mathcal V}}
\def\cW{{\mathcal W}}
\def\cX{{\mathcal X}}
\def\cY{{\mathcal Y}}
\def\cZ{{\mathcal Z}}
\newcommand{\rmod}[1]{\: \mbox{mod} \: #1}

\def\cg{{\mathcal g}}

\def\vr{\mathbf r}

\def\e{{\mathbf{\,e}}}
\def\ep{{\mathbf{\,e}}_p}
\def\eq{{\mathbf{\,e}}_q}

\def\Tr{{\mathrm{Tr}}}
\def\Nm{{\mathrm{Nm}}}

 \def\SS{{\mathbf{S}}}

\def\lcm{{\mathrm{lcm}}}

\def\({\left(}
\def\){\right)}
\def\fl#1{\left\lfloor#1\right\rfloor}
\def\rf#1{\left\lceil#1\right\rceil}

\def\mand{\qquad \mbox{and} \qquad}

\newcommand{\commB}[2][]{\todo[#1,color=blue!60]{B: #2}}
\newcommand{\commI}[2][]{\todo[#1,color=green!60]{I: #2}}
\newcommand{\commII}[2][]{\todo[#1,color=red!60]{I: #2}}
\newcommand{\commY}[2][]{\todo[#1,color=magenta!60]{Y: #2}}



\hyphenation{re-pub-lished}

\mathsurround=1pt

\def\bfdefault{b}

\def \F{{\mathbb F}}
\def \K{{\mathbb K}}
\def \Z{{\mathbb Z}}
\def \N{{\mathbb N}}
\def \Q{{\mathbb Q}}
\def \R{{\mathbb R}}
\def \C{{\mathbb C}}
\def\Fp{\F_p}
\def \fp{\Fp^*}

\def\Kmnp{\cK_p(m,n)}
\def\Kmnq{\cK_q(m,n)}
\def\KKap{\cH_p(a)}
\def\KKaq{\cH_q(a)}
\def\KKmnp{\cH_p(m,n)}
\def\KKmnq{\cH_q(m,n)}

\def\Kl{{\mathsf K}}

\def\Klmnp{\cK_p(\ell, m,n)}
\def\Klmnq{\cK_q(\ell, m,n)}

\def \SALMNq {\cS_q(\balpha;\cL,\cI,\cJ)}
\def \SALMNp {\cS_p(\balpha;\cL,\cI,\cJ)}

\def \SACXMQX {\fS(\balpha,\bzeta, \bxi; M,Q,X)}

\def \balpha{\bm{\alpha}}
\def \bbeta{\bm{\beta}}
\def \bgamma{\bm{\gamma}}
\def \blambda{\bm{\lambda}}
\def \bchi{\bm{\chi}}
\def \bphi{\bm{\varphi}}
\def \bpsi{\bm{\psi}}

\def\SIJq{S_q(\balpha; \cI,\cJ)}
\def\SIJp{S_p(\balpha; \cI,\cJ)}

\def\UMNp{U_{h,p}(\balpha, \bbeta; M,N)}
\def\VMNp{V_{c,p} (\varphi;M,N)}
\def\LG#1{\(\frac{#1}{p}\)}

\def\SMJq{S_q(\balpha; \cM,\cJ)}
\def\SMJp{S_p(\balpha; \cM,\cJ)}

\def\WIKq{W_{a,q}(\bgamma; \cI,K)}
\def\WIKp{W_{a,p}(\bgamma; \cI,K)}
\def\WMKq{W_{a,q}(\bgamma; \cM,K)}
\def\WMKp{W_{a,p}(\bgamma; \cM,K)}

\def\WaIKq{W_{a,q}(\balpha, \bgamma; \cI;K)}
\def\WaIKp{W_{a,p}(\balpha, \bgamma; \cI,K)}
\def\WaMKq{W_{a,q}(\balpha, \bgamma; \cM;K)}
\def\WaMKp{W_{a,p}(\balpha, \bgamma; \cM;K)}
\def\WaMKKq{\widetilde{W}_{a,q}(\balpha, \bgamma; \cM,\cK)}
\def\WaMKKp{\widetilde{W}_{a,p}(\balpha, \bgamma; \cM,\cK)}

\def\WaIKKq{\widetilde{W}_{a,q}(\balpha, \bgamma; \cI,\cK)}
\def\WaIKKp{\widetilde{W}_{a,p}(\balpha, \bgamma; \cI,\cK)}

\def\WAMNQ{W_q(\balpha, \bgamma; M,N,Q)}

\def\RIJp{\cR_p(\cI,\cJ)}
\def\RIJq{\cR_q(\cI,\cJ)}

\def\TWXJp{\cT_p(\bomega;\cX,\cJ)}
\def\TWXJq{\cT_q(\bomega;\cX,\cJ)}
\def\TWpXJp{\cT_p(\bomega_p;\cX,\cJ)}
\def\TWqXJq{\cT_q(\bomega_q;\cX,\cJ)}
\def\TWJq{\cT_q(\bomega;\cJ)}
\def\TWqJq{\cT_q(\bomega_q;\cJ)}

\newcommand{\supp}{\operatorname{supp}}
 \newcommand{\Res}{\operatorname{Res}}

  \def \kbar{k^{-1} }
 \def \xbar{x^{-1} }
  \def \ybar{y^{-1} }

\title[Consecutive multiplicatively dependent triples]
{Counting consecutive multiplicatively dependent triples}

\author[I. E. Shparlinski] {Igor E. Shparlinski}
\address{School of Mathematics and Statistics, University of New South Wales, Sydney NSW 2052, Australia}
\email{igor.shparlinski@unsw.edu.au}

 \author[N. Sleiman] {Nicolas Sleiman}
\address{Department of Applied Physics and Applied Mathematics, Columbia University, New York, NY 10027, USA}
\email{nas2241@columbia.edu}

\begin{abstract} We obtain a nontrivial bound on the number of triples of positive integers 
$ (a,b,c) \in [1,H]^3$, which are multiplicatively dependent but each pair of its elements is not,  and so is the triple $ (a+1,b+1,c+1)$. The method is based on studying factorisations of some resultants and 
a recent improvement by G.~Binyamini, R.~Cluckers and F.~Kato (2025) of the classical Bombieri--Pila bound. \end{abstract}

\subjclass[2020]{11G30, 11N25}

\keywords{Multiplicative dependence, consecutive triples, integer points on curves, resultants}

\maketitle

\tableofcontents

\section{Introduction} 

\subsection{Motivation and set-up} 
As usual, we say that an $n$-tuple of non-zero integers $\{\alpha_1, \alpha_2, \ldots, \alpha_n\}$ is 
  multiplicatively dependent if there exist integers $k_1, k_2, \ldots, k_n$, not all zero, such that
 \[
\alpha^{k_1} \ldots \alpha^{k_n} = 1, 
\]
we refer to~\cite{dlBKS, KSSS, PSSS, St} for some recent counting results, also for   multiplicatively dependent
$n$-tuples of algebraic integers. Some related results, concerning studying this notion in non-linear sequences,
 can be found in~\cite{BCMOS, BBGMOS, BHOS, DubSha, HOS, You}, see also references therein. 

 Here, motivated by the recent works
of Bennett, Pink and Vukusic~\cite{BPV} and Vukusic and Ziegler~\cite{VuZi}, we focus on the {\it consecutive multiplicatively dependent
triples\/}, that is, triples of integers $(a,b,c) \in \N^3$, where $\N = \{1, 2, \ldots\}$ is the set of positive integers,   such that both  $(a,b,c)$ and  $(a+1,b+1,c+1)$ are multiplicatively dependent.

More precisely, given a large integer $H$, we are interested in counting such triples with positive integer entries of size at most $H$. 
In fact, as in the case of counting ordinary multiplicatively dependent $n$-tuples, we count consecutive  multiplicatively dependent
triples   of maximal rank, such that  relations 
\[
a^{k_1}  b^{m_1} c^{n_1} = (a+1)^{k_2}  (b+1)^{m_2} (c+1)^{n_2}  = 1
\]
are possible with some nonzero integers $(k_\nu, m_\nu,  n_\nu)$, $\nu =1,2$. 
Indeed, the condition of maximal rank allows us to discard from our considerations  trivial
consecutive  multiplicatively dependent
triples of the form $(a,a, c)$ and $(a+1,a+1, c)$. 

Thus, for a real parameter $H \ge 1$ we define $M(H)$ the number of 
consecutive  multiplicatively dependent triples  $(a,b,c)$ of maximal rank
with positive integers $a,b,c\le H$.

We now recall that, as a very special case of a much more general bound in~\cite[Proposition~1.3]{PSSS} 
(see also~\eqref{eq:bound LnH} below), the number $L(H)$ of 
ordinary multiplicatively dependent triples  $(a,b,c)$ of maximal rank
with positive integers $a,b,c\le H$ satisfies 
\begin{equation}
\label{eq:bound LH}
L(H)  \le H^{1+o(1)}, \qquad \text{as}\ H\to \infty, 
\end{equation}
which, due to the trivial inequality $M(H) \le L(H)$, implies the same bound 
on $M(H)$.

\subsection{Main results}  
Here we improve on the bound implied by the trivial inequality
$ M(H) \le L(H)$.

\begin{theorem}
\label{thm:Bound MH}
There exists an absolute constant $c > 0$ such that for $H \ge 2$ 
\[
M(H)  \le H^{1/2} (\log H)^c . 
\]
\end{theorem}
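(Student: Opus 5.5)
Maximal rank means that both $(a,b,c)$ and $(a+1,b+1,c+1)$ satisfy a multiplicative relation in which all three exponents are nonzero. Such a relation can be written as $a^{k_1}=b^{m_1}c^{n_1}$ with suitable signs, and similarly for the shifted triple. The plan is to parametrise such triples by the standard multiplicative structure and then reduce the count to integer points on a family of plane curves of bounded degree.

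The first step is the parametrisation. If $(a,b,c)$ is multiplicatively dependent but no pair is, then $a,b,c$ lie in a rank-two multiplicative group. The standard argument in~\cite{PSSS} writes
\[
a=u^{r_1}v^{s_1},\qquad b=u^{r_2}v^{s_2},\qquad c=u^{r_3}v^{s_3},
\]
with positive rationals $u,v$, up to finitely many sign and root-of-unity patterns. The same holds for the shifted triple, $a+1=x^{t_1}y^{w_1}$ and so on. Almost all the count comes from small exponents. The contribution from tuples in which some exponent exceeds $\log H$ in absolute value is handled exactly as in the proof of~\eqref{eq:bound LH}: large exponents force some of $u,v$ to be bounded by $H^{1/2}$ or smaller, giving at most $H^{1/2}(\log H)^{O(1)}$ triples. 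So we may fix exponent vectors of size $O(\log H)$, at a cost of $(\log H)^{O(1)}$ choices. For each choice, the triple is determined by $(u,v)$, which in the integral normalisation can be taken as integers of size $H^{O(1)}$, with the shifted triple imposing two polynomial equations.

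The second step is to eliminate variables. From $a=u^{r_1}v^{s_1}$, $b=u^{r_2}v^{s_2}$, $c=u^{r_3}v^{s_3}$ together with the dependence relation for $(a+1,b+1,c+1)$, we get one polynomial relation
\[
F(a,b)=\prod (a+1)^{\cdot}(b+1)^{\cdot}(c(a,b)+1)^{\cdot}=1,
\]
where $c$ is expressed via $a,b$ through the first relation, $c^{n_1}=a^{k_1}b^{m_1}$. Concretely, we take the resultant of $X^{k}Y^{m}Z^{n}-1$ type binomials with the shifted ones to obtain a curve $G(a,b)=0$ of degree $O(\log H)$ on which every counted pair $(a,b)$ lies. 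The key point is to show that this curve has no components that are translates of subtori (lines $a=b$ and so on, which correspond to non-maximal rank) and that it has no linear or rational component containing many points; this is where we study the factorisation of the resultant. Then the Binyamini--Cluckers--Kato improvement of Bombieri--Pila bounds the number of integer points of height at most $H$ on each absolutely irreducible component of degree $d\ge 2$ by $d^{O(1)}H^{1/d}(\log H)^{O(1)}$. This gives $H^{1/2}(\log H)^{O(1)}$ per exponent choice. Summing over the $(\log H)^{O(1)}$ choices yields the theorem.

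The main obstacle is the factorisation step: excluding linear components of the resultant curve, or at least showing that on any linear component the number of points is already small. A linear component would allow up to $H$ points, and the bound would collapse back to the trivial one. I would first try to show that degree-one components force $a,b,c$ to satisfy $a=b$-type relations, contradicting maximal rank. This should follow by comparing leading and constant terms of the binomials, since a line $a=\lambda b+\mu$ substituted into two independent multiplicative relations generically yields an identity only if $\lambda=1$ and $\mu=0$.
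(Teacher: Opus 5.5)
Your architecture is the same as the paper's. You eliminate $c$ via the resultant in $Z$, apply the Binyamini--Cluckers--Kato bound to components of degree $d\ge 2$ (conics give the $H^{1/2}$), and sum over $(\log H)^{O(1)}$ exponent vectors. But the step that carries the real content of the theorem is not done: showing that linear components of the resultant curve contribute only $O(1)$ points. You leave it as ``I would first try\ldots'' plus a heuristic that holds only ``generically''. That is not enough, since the conclusion is needed for every exponent vector. The target you state is also not the right one, because linear factors need not correspond to $X=Y$. You must also allow for:
lines with non-real coefficients, which contain at most one integer point each;
vertical lines;
lines such as $X+Y$, $X+Y+2$ or $3X+Y+2$, which this kind of argument does not rule out as factors but which have no zeros in $\N^2$.
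So the statement to prove is that every linear component contains $O(1)$ points of $\N^2$ off the diagonal, and no argument is given for it.

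The missing idea is to specialise at well-chosen points of the line. Since $Z^{n_1}-X^{k_1}Y^{m_1}$ is monic in $Z$, if $\lambda X+Y+\mu$ divides the resultant then at every point of this line the two polynomials in $Z$ have a common root. At the point with $X=0$ the first polynomial is $Z^{n_1}$, so the common root is $0$. This gives $(1-\mu)^{m_2}=1$, hence $\mu\in\{0,2\}$ for real $\mu$. At the point with $X=-1$ the shifted polynomial is a constant multiple of $(Z+1)^{n_2}$, so (when that constant is nonzero) the common root is $-1$. This forces $|\lambda|=1$ or $|\lambda-2|=1$. What remains is an explicit finite list of lines, each equal to $Y=X$ or without zeros in $\N^2$, and this works for both sign patterns of the second relation. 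Vertical lines $X=x_0$ are excluded the same way by specialising at $Y=0$, which forces $(x_0+1)^{k_2}=1$.

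Separately, your first step is unsupported and also unnecessary. You give no justification for the claim that ``large exponents'' contribute $H^{1/2}(\log H)^{O(1)}$ triples. By Lemma~\ref{lem:mult_dep_expon} the relation exponents can always be taken $\ll(\log H)^2$, so no large-exponent case arises and the $(u,v)$-parametrisation is never needed. Two smaller points: the resultant has degree $(\log H)^{O(1)}$ rather than $O(\log H)$, which is harmless. Components of degree $\ge 2$, rational or not, need no special treatment.
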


Combining Theorem~\ref{thm:Bound MH} with Lemma~\ref{lem:lower_bound} 
below, which in particular shows that the upper bound~\eqref{eq:bound LH} is tight, 
we derive:

\begin{cor}
\label{cor:MH vs LH}
There exists an absolute constant $c_0 > 0$ such that for $H \ge 2$ 
\[
M(H)  \le L(H) H^{-1/2}  (\log H)^{c_0} 
\]
\end{cor}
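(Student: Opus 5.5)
The corollary is a direct comparison of two asymptotic bounds, so the plan is to combine Theorem~\ref{thm:Bound MH} with a matching lower bound for $L(H)$. Theorem~\ref{thm:Bound MH} gives $M(H)\le H^{1/2}(\log H)^{c}$ for $H\ge 2$. It therefore suffices to show that $L(H)\ge H(\log H)^{-c_1}$ for some absolute $c_1\ge 0$ and all $H\ge 2$. That is precisely the content we expect from Lemma~\ref{lem:lower_bound}, which the text says shows that~\eqref{eq:bound LH} is tight. Given such a lower bound,
\[
M(H)\le H^{1/2}(\log H)^{c} = L(H)\,H^{-1/2}\cdot \frac{H(\log H)^{c}}{L(H)} \le L(H)\,H^{-1/2}(\log H)^{c+c_1},
\]
so we may take $c_0=c+c_1$.

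The only real content is the lower bound for $L(H)$. If Lemma~\ref{lem:lower_bound} is not already stated in this form, I would prove it by an explicit construction of multiplicatively dependent triples of maximal rank. For each integer $a$ with $2\le a\le H^{1/2}$ which is not a perfect power, and each $c\le H$ coprime to $a$ and not a power of a common base with $a$, the triple $(a,a^2,c)$ satisfies $a^2\cdot (a^2)^{-1}c^0=1$. However, this relation involves only two entries. We need the maximal-rank condition, which the introduction uses to exclude pairwise dependence. So instead I would take triples $(a,b,ab)$ with $a,b\le H^{1/2}$ multiplicatively independent. These satisfy $a\cdot b\cdot (ab)^{-1}=1$, and every pair is independent whenever $a,b$ are independent; this is automatic, for example, when $\gcd(a,b)=1$ and $a,b\ge2$. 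The number of such pairs is $\gg H$, giving $L(H)\gg H$ with no logarithmic loss, so $c_1=0$ works.

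The main obstacle, if any, is to make sure the lower-bound construction matches the paper's precise definition of maximal rank. This means checking that no pair among $a,b,ab$ is multiplicatively dependent. For coprime $a,b\ge 2$, a relation $a^k(ab)^m=1$ forces $k+m=0$ and $m=0$ by unique factorisation, and similarly for the other pairs. With that verified, the corollary follows by the one-line division above.
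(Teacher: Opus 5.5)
Your proof is correct and follows the paper's structure. The paper also proves the corollary by dividing the bound of Theorem~\ref{thm:Bound MH} by the lower bound $L(H)\gg H$, which it takes from Lemma~\ref{lem:lower_bound} with $n=3$, $k=1$.

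Where you differ is in how that lower bound is produced. For $k=1$ the paper's construction reduces to the one-parameter family $(6a,2a,3)$ with $\gcd(a,6)=1$ and $a\le H/6$; the counting Lemma~\ref{lem:card_lem} is not needed there, since $a_1=b_1$. You instead use the two-parameter family $(a,b,ab)$. Your maximal-rank check is right, and it only needs $a,b$ to be multiplicatively independent, because $a^{k}(ab)^{m}=a^{k+m}b^{m}$. Your family is more elementary. If you impose only $ab\le H$ rather than $a,b\le H^{1/2}$, it even gives $L(H)\gg H\log H$, slightly more than Lemma~\ref{lem:lower_bound} provides for $n=3$. The paper's construction, on the other hand, is designed to extend to every odd $n=2k+1$, which is the generality in which the lemma is stated.

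One small slip: with $a,b\le H^{1/2}$ your family has about $(6/\pi^2)H$ members. That gives $L(H)\ge c'H$ with some $c'<1$, not $L(H)\ge H$, so ``$c_1=0$'' is not literally correct. The constant is absorbed by enlarging $c_0$, using $M(H)\le L(H)$, the fact that $M(H)=0$ for $H<3$, and $\log H\ge\log 3>1$ otherwise. The paper glosses over the same point.
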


\section{Preliminaries} 

\subsection{The Bombieri--Pila bound}

One of our  main tools is a recent improvement by Binyamini, Cluckers and Kato~\cite[Theorem~3]{BCK}
of the celebrated bound of Bombieri and Pila~\cite{BoPi} on the number of integer points on hypersurfaces.
In fact, the original result of~\cite{BoPi} is not sufficient for our purpose due to rather poor dependence on the degree 
of curves.   We present the bound of~\cite[Theorem~3]{BCK} only 
in the case of plane curves. 

We recall that the  equivalent notations
\[
U = O(V) \quad \Longleftrightarrow \quad U \ll V \quad \Longleftrightarrow \quad V\gg U
\] 
all mean that $|U|\le C V$ for some positive constant $C> 0$, which throughout this work is always absolute. 

\begin{lemma}
\label{lem:UnifBombPila}
    Let $\mathcal{C} \subset \mathbb{A}^2_{\mathbb{Q}}$ be a curve given by an irreducible polynomial of degree $d>0$.  
    Then there is a  constant $\kappa$ such that:
\[N_\mathcal C(H) \ll d^2H^{1/d}(\log H)^{\kappa}.
\]
\end{lemma}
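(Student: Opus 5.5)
The plan is to obtain the lemma as a direct specialisation of~\cite[Theorem~3]{BCK}; there is essentially nothing new to prove, only a check that the hypotheses match. Here $N_\mathcal C(H)$ denotes the number of integer points $(x,y)\in\mathcal C$ with $|x|,|y|\le H$. I recall that~\cite[Theorem~3]{BCK} applies to an irreducible hypersurface $X\subset \mathbb{A}^n$ of degree $d$ and bounds the number of integral points of height at most $H$ on the part of $X$ lying outside a union of lines. I will use it in the form: at most $O_n\(d^{2}H^{(n-2)+1/d}(\log H)^{\kappa(n)}\)$ points. Taking $n=2$, the exceptional locus is empty or finite (or $X$ is itself a line), and the exponent becomes exactly $1/d$. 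With $\kappa=\kappa(2)$ this gives the stated bound $d^2H^{1/d}(\log H)^\kappa$.

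The first point to check is the notion of irreducibility. I expect~\cite{BCK} to assume that $X$ is geometrically irreducible, whereas the lemma only assumes the defining polynomial $F\in\Q[x,y]$ is irreducible over $\Q$. I would reduce to the geometric case as follows. Suppose $F$ is irreducible over $\Q$ but not over $\overline{\Q}$. Then $F=c\prod_{i} G_i$ with the $G_i$ Galois conjugates of each other, and there are at least two of them. Any rational point of $\mathcal C$ lies on some $G_i$, and applying Galois automorphisms shows it lies on every $G_i$. In particular it lies on $G_1\cap G_2$. These two curves have no common component, so by B\'ezout their intersection has at most $d^2$ points. Hence $N_\mathcal C(H)\le d^2$ in this case, which is stronger than the claim since $(\log H)^\kappa\ge (\log 2)^\kappa\gg 1$ for $H\ge 2$.

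For a geometrically irreducible $F$, I would apply~\cite[Theorem~3]{BCK} directly. If $\mathcal C$ is a line ($d=1$), the trivial bound $N_\mathcal C(H)\le 2H+1\ll H$ already suffices. Otherwise the statement of~\cite{BCK} excludes the points on any lines contained in $\mathcal C$, and an irreducible curve of degree $d\ge 2$ contains no lines, so the bound covers all integer points. I would also make sure the box normalisation matches, since~\cite{BCK} may count points in $[1,H]^2$ or $[-H,H]^2$. Changing between these only rescales $H$ by a factor of at most $2$, which costs a factor $2^{1/d}\le 2$ and is absorbed into the implied constant.

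The only step I expect to need genuine care is confirming the exact form of the exponent of $d$ and of $\log H$ in~\cite[Theorem~3]{BCK}, i.e.\ that it is $d^2$ and a fixed power $(\log H)^\kappa$, uniform in $d$. This uniformity in $d$ is precisely why the paper uses this result rather than the original Bombieri--Pila bound. If the published version had, say, $d^{3}$ or an extra factor, the lemma would need to be adjusted accordingly, but the later application to resultant curves should tolerate any polynomial dependence on $d$.
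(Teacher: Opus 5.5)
Your proposal is correct and takes the same approach as the paper: the paper gives no separate argument and simply quotes the plane-curve case of \cite[Theorem~3]{BCK}. Your extra checks are valid safeguards that the paper leaves implicit, namely the Galois--B\'ezout argument showing that a curve irreducible over $\Q$ but not absolutely irreducible has at most $d^2$ rational points, the trivial case $d=1$, and the harmless change of box.
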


\subsection{The existence of small exponents}

The following result is a special case of a more general estimate in~\cite[Lemma~2.3]{PSSS}
\begin{lemma}
\label{lem:mult_dep_expon}
  Let $(a,b,c)$ be a multiplicatively dependent triple  of
positive  integers  $a,b,c \le H$ with some $H \ge 2$. Then there exist exponents 
    $k, m, n \ll  (\log H)^2$ such that 
\[a^k b^m c^n   = 1.
\]
\end{lemma}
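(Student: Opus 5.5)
The plan is to linearise the problem through prime factorisations and then write down an explicit kernel vector whose entries are small minors. Let $p_1, \ldots, p_s$ be the distinct primes dividing $abc$. Form the $s\times 3$ integer matrix $A$ whose $i$-th row is
\[
\(v_{p_i}(a), v_{p_i}(b), v_{p_i}(c)\),
\]
where $v_p$ is the $p$-adic valuation. Since $a,b,c \le H$, every entry of $A$ satisfies $0 \le v_{p_i}(\cdot) \le \log H/\log 2$.

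By unique factorisation, for integers $k,m,n$ the relation $a^k b^m c^n = 1$ holds if and only if $A(k,m,n)^T = 0$. Hence multiplicative dependence of $(a,b,c)$ is equivalent to the columns of $A$ being linearly dependent over $\Q$, that is, $r = \operatorname{rank} A \le 2$. It therefore suffices to produce a nonzero integer kernel vector of $A$ with entries $\ll (\log H)^2$. I treat the three possible values of $r$ in turn.

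\emph{Case $r=0$.} Then $a=b=c=1$ (or $s=0$), and we may take $(k,m,n)=(1,0,0)$.

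\emph{Case $r=1$.} Pick a nonzero row $(x,y,z)$ of $A$. All other rows are rational multiples of it, so any integer vector orthogonal to $(x,y,z)$ lies in the kernel of $A$. If, say, $x\ne 0$, take $(k,m,n)=(-y,x,0)$ when this is nonzero, and otherwise $(-z,0,x)$. This vector is nonzero, orthogonal to $(x,y,z)$, and has entries $\le \log H/\log 2$.

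\emph{Case $r=2$.} Choose two rows $\mathbf u,\mathbf w$ of $A$ that are linearly independent and take $(k,m,n)=\mathbf u\times\mathbf w$, their cross product. This vector is nonzero because $\mathbf u,\mathbf w$ are independent. It is orthogonal to both $\mathbf u$ and $\mathbf w$, and these two rows span the row space of $A$, so it is orthogonal to every row and lies in the kernel. Each of its entries is a $2\times 2$ minor of $A$, hence bounded in absolute value by $2(\log H/\log 2)^2 \ll (\log H)^2$.

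In all cases we obtain integers $k,m,n$, not all zero, with $k,m,n\ll(\log H)^2$ and $a^k b^m c^n=1$.

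There is no serious obstacle in this argument. The only point requiring care is the observation that in the rank-$2$ case the kernel vector built from just two rows annihilates all rows. This holds because those two rows span the whole row space.
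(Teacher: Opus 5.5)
\textbf{Verdict.} Your proof is correct, but it follows a different route from the paper. The paper gives no argument of its own for this lemma; it quotes it as a special case of \cite[Lemma~2.3]{PSSS}. That result is a general bound on the exponents of a multiplicative relation among multiplicatively dependent algebraic numbers of bounded height in a number field, of the classical Loxton--van der Poorten type.

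\textbf{Comparison.} You instead use unique factorisation in $\Z$. The relation $a^k b^m c^n=1$ becomes the linear system given by the valuation matrix $A$, whose entries are at most $\log H/\log 2$. You then write down a kernel vector explicitly as a vector of $2\times 2$ minors, namely the cross product of two independent rows, and the cases of rank $\le 1$ are trivial.

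\textbf{What your route gives.} It is short and self-contained, with an explicit constant. Since the entries of $A$ are nonnegative, it also gives the refined shape
\[
|k|\ll \log b\log c, \quad |m|\ll \log a\log c, \quad |n|\ll \log a\log b.
\]
The same Cramer-rule construction, using $r\times r$ minors of a set of $r$ independent rows, gives exponents $\ll (\log H)^{n-1}$ for $n$-tuples of positive integers.

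\textbf{What the cited result gives.} Its advantage is generality. For algebraic numbers, prime-ideal valuations determine a product only up to a unit. Units and roots of unity are invisible to the valuation matrix, so one must also control the archimedean (logarithmic-embedding) part, which is not an integer matrix. That is where the heavier machinery is needed. Since the paper only deals with positive rational integers, your elementary argument is fully sufficient for its purposes.

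\textbf{Minor point.} In your rank-$1$ case the fallback $(-z,0,x)$ is never used, because $(-y,x,0)\ne 0$ whenever $x\ne 0$.
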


\subsection{A lower bound for counting multiplicatively dependent triples}

As we have mentioned, to derive Corollary~\ref{cor:MH vs LH} from Theorem~\ref{thm:Bound MH} we need to show that the upper bound~\eqref{eq:bound LH} is close to the best possible. 
In fact here we establish this in a broader generality. 

Let $L_n(H)$ be the number of $n$-tuples $(a_1, \ldots, a_n)$ 
of positive integers from the interval $[1,H]$ which are 
multiplicatively dependent of maximal rank (that is, such that each proper 
sub-tuple is multiplicatively independent).  By~\cite[Proposition~1.3]{PSSS} 
we have 
\begin{equation}
\label{eq:bound LnH}
L(H)  \le H^{\fl{n/2}+o(1)}, \qquad \text{as}\ H\to \infty, 
\end{equation}

For an even $n = 2k$, the bound~\eqref{eq:bound LnH}  is tight by~\cite[Theorem~5.2]{PSSS}. Here, we modify the construction 
in the proof of~\cite[Theorem~5.2]{PSSS} and make it work for odd $n=2k+1$, in particular, for $n=3$. 

First, we recall the following result, which is given 
as~\cite[Lemma~5.1]{PSSS}, which in turn is a 
variation of~\cite[Lemma~2.3]{MS}, see also~\cite[Theorem~1.1]{AffCorr} for further generalisations. 

\begin{lemma} \label{lem:card_lem}
    Let $k$ and $q$ be integers with $k,q \ge 2$. Let $\bgamma=(\gamma_1,\ldots ,\gamma_k)$ where $\gamma_i$ is a positive real number for all $i$. Then, there exists a positive number $\Gamma(q,\gamma)$ such that for $T \rightarrow \infty$, we have:
\[
\mathop{\sum \ldots \sum}_{\substack{ a_1 \ldots a_k = b_1 \ldots b_k \\ 
 \gcd(a_i b_i, q) = 1 \\  1 \le a_i, b_i \le T^{\gamma_i} \\ 
  i = 1, \ldots, k}} 1 \sim \Gamma(q,\bgamma)T^{\gamma}(\log T)^{(k-1)^2},
\]
where $\gamma = \gamma_1+ \ldots + \gamma_k$.  
\end{lemma}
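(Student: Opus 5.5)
The plan is to turn the equation $a_1\cdots a_k=b_1\cdots b_k$ into a free parametrisation by a $k\times k$ matrix of integers. Then I count lattice points in a region defined by row and column product conditions, and read off the exponent $(k-1)^2$ as the dimension of a polytope in logarithmic coordinates.

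\textbf{Parametrisation.} Given a solution, I define $c_{ij}$ greedily, for $i,j$ in lexicographic order, by
\[
c_{ij}=\gcd\Bigl(a_i\Big/\prod_{j'<j}c_{ij'},\ b_j\Big/\prod_{i'<i}c_{i'j}\Bigr).
\]
Since $\prod a_i=\prod b_j$, every prime power is exhausted, so $a_i=\prod_j c_{ij}$ and $b_j=\prod_i c_{ij}$. Conversely, a matrix $(c_{ij})$ arises this way exactly when it satisfies certain coprimality conditions of the form $\gcd(c_{ij}, c_{i'j'})=1$ for suitable index pairs (as for $k=2$, where only $\gcd(c_{12},c_{21})=1$ is needed). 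This makes the map a bijection onto a set of matrices cut out by coprimality. The condition $\gcd(a_ib_i,q)=1$ becomes $\gcd(c_{ij},q)=1$ for all $i,j$.

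\textbf{Counting the matrices.} Next I count matrices with $\gcd(c_{ij},q)=1$, the coprimality relations above, and
\[
\prod_j c_{ij}\le T^{\gamma_i},\qquad \prod_i c_{ij}\le T^{\gamma_j}.
\]
I remove the pairwise coprimality conditions by M\"obius inversion, $\sum_{d\mid \gcd(c_{ij},c_{i'j'})}\mu(d)$. This produces a sum over auxiliary moduli whose terms decay like $d^{-2}$, so it converges and only alters the constant.

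For the main count I use a multiple Dirichlet series $\sum\prod c_{ij}^{-s_{ij}}$, or equivalently an iterated summation argument. To leading order, the number of $k^2$-tuples of integers coprime to $q$ with these row and column constraints behaves like
\[
\int\!\!\cdots\!\!\int \prod_{i,j} dc_{ij}
\]
over the region. Setting $c_{ij}=e^{x_{ij}}$, the integrand becomes $e^{\sum x_{ij}}$. It is maximised where all row constraints are tight, giving total mass $T^{\gamma}$. The set of $x\in\R_{\ge 0}^{k\times k}$ with all row sums equal to $\gamma_i\log T$ and all column sums equal to $\gamma_j\log T$ is a transportation polytope. Its dimension is $k^2-(2k-1)=(k-1)^2$, and its volume scales like $(\log T)^{(k-1)^2}$. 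Laplace-type integration transversal to this face then yields $\Gamma(q,\bgamma)T^\gamma(\log T)^{(k-1)^2}$. The constant collects the volume of the normalised polytope $\{\sum_j y_{ij}=\gamma_i,\ \sum_i y_{ij}=\gamma_j,\ y\ge 0\}$, the Euler factor $\prod_{p\mid q}(1-1/p)^{k^2}$, and the convergent M\"obius series. It is positive because the polytope has nonempty interior in its affine span: take $y_{ij}=\gamma_i\gamma_j/\gamma$. This interior point is also where the requirement that row and column budgets coincide is used.

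\textbf{Main obstacle.} I expect the hardest step to be the rigorous passage from the sum to the integral with an error $o(T^\gamma(\log T)^{(k-1)^2})$. Near the boundary of the polytope some $c_{ij}$ are small, so replacing sums by integrals loses accuracy. I would handle this by induction on the number of variables. Sum one variable at a time using $\sum_{n\le X,\,\gcd(n,q)=1}1=\frac{\varphi(q)}{q}X+O(q)$ for the linear innermost variable, and $\sum_{n\le X,\,\gcd(n,q)=1}1/n=\frac{\varphi(q)}{q}\log X+O_q(1)$ for the others. Then check that each error term loses at least one power of $\log T$, which amounts to a lower-dimensional face of the polytope. If this bookkeeping becomes unwieldy, I would fall back on a multidimensional Tauberian theorem of de la Bret\`eche type applied to the Dirichlet series. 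Its polar divisor structure has $2k-1$ independent linear forms in $k^2$ variables, which gives the exponent $(k-1)^2$ directly.
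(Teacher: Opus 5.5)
The paper gives no argument for this lemma; it is quoted from \cite[Lemma~5.1]{PSSS}. So your proposal has to stand on its own. Its combinatorial and geometric skeleton is sound. Prime by prime, your greedy rule is the northwest-corner rule, and its image is exactly the set of matrices with
\[
\gcd(c_{ij'},c_{i'j})=1 \qquad \text{for all } i<i' \text{ and } j<j'.
\]
You should state this explicitly, since it fixes both the M\"obius structure and the local factors. The exponent $(k-1)^2$ as the dimension of the transportation polytope is right, and so is the use of the point $y_{ij}=\gamma_i\gamma_j/\gamma$. The heuristic also gives the correct constant: for $k=2$, $\gamma_1=\gamma_2=1$ and without the condition involving $q$, it returns the classical $\frac{12}{\pi^2}T^2\log T$.

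The gap is that the analytic content of the lemma is not supplied, and the step you treat as routine is not routine.

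\emph{The main asymptotic.} You need the number of matrices with $\prod_j c_{ij}\le T^{\gamma_i}$ and $\prod_i c_{ij}\le T^{\gamma_j}$ to within error $o\bigl(T^\gamma(\log T)^{(k-1)^2}\bigr)$. You only announce this, together with two possible strategies, and it is the whole lemma.

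\emph{The M\"obius step.} It does not ``only alter the constant''. To interchange the infinite sum over moduli with $T\to\infty$, you need a bound of the shape $T^\gamma(\log T)^{(k-1)^2}/\prod_{i,j}D_{ij}$ for the count with $D_{ij}\mid c_{ij}$, uniformly in the $D_{ij}$, or at least a summable tail. Rankin's trick, with exponents $\frac12+\epsilon_i$ on rows and $\frac12+\delta_j$ on columns, cannot give this. The reason is the identity
\[
\sum_i\gamma_i\epsilon_i+\sum_j\gamma_j\delta_j=\sum_{i,j}\frac{\gamma_i\gamma_j}{\gamma}\,(\epsilon_i+\delta_j).
\]
Keeping $T^{\sum_i\gamma_i\epsilon_i+\sum_j\gamma_j\delta_j}$ bounded therefore forces all $k^2$ factors $\zeta(1+\epsilon_i+\delta_j)$ to be $\gg\log T$, so you only get $(\log T)^{k^2}$. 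The uniform bound needs the same polytope analysis as the main term.

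Your fallback is the right tool, but it is aimed at the wrong series and its count of linear forms is inverted. De la Bret\`eche's theorem treats boxes in the summation variables. So apply it to the $2k$ original variables, not to the $c_{ij}$, whose constraints are on products. Your parametrisation gives
\[
\sum_{\substack{a_1\cdots a_k=b_1\cdots b_k\\ \gcd(a_ib_i,q)=1}}\ \prod_{i=1}^k a_i^{-s_i}b_i^{-w_i}=G(s,w)\prod_{i,j=1}^k\zeta(s_i+w_j),
\]
where $G$ is an Euler product that is holomorphic and bounded wherever all $\Re(s_i+w_j)\ge\frac12+\varepsilon$. The growth hypotheses of the theorem follow from convexity bounds for $\zeta$.

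So the polar divisor consists of the $k^2$ forms $s_i+w_j$, which have rank $2k-1$ in $2k$ variables. It is not ``$2k-1$ independent forms'', which taken literally would give exponent $0$. Hence the sum equals $T^\gamma P(\log T)$ with $\deg P\le k^2-(2k-1)=(k-1)^2$.

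Equality of the degree comes from de la Bret\`eche's second theorem. Its nondegeneracy input here is first that
\[
G(\tfrac12,\ldots,\tfrac12)=\prod_{p\mid q}\Bigl(1-\frac1p\Bigr)^{k^2}\prod_{p\nmid q}\Bigl(1-\frac1p\Bigr)^{k^2}\sum_{m\ge0}\binom{m+k-1}{k-1}^2p^{-m}>0 .
\]
Second, with $e_i^*(s,w)=s_i$ and $f_j^*(s,w)=w_j$, the vector $(\gamma_1,\ldots,\gamma_k,\gamma_1,\ldots,\gamma_k)=\sum_{i,j}\frac{\gamma_i\gamma_j}{\gamma}(e_i^*+f_j^*)$ must be a strictly positive combination of the forms. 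That is exactly your interior point.

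On this route the M\"obius inversion and the sum-to-integral comparison disappear. The matrix parametrisation is needed only to compute the local factors of $G$.
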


We have the following lower bound on $L_n(H)$:

\begin{lemma}
\label{lem:lower_bound}
 Let $n=2k+1$, where $k\ge 1$ is an integer. Then, for $H \ge 2$, 
 we have
\[
L_{n,\mathbb{Q},n-1}(H) \ge cH^k(\log H)^{(k-1)^2}
\]
where $L_{n,\mathbb{Q},n-1}(H)$ denotes the number of multiplicatively dependent $n$-tuples of rank $n-1$ whose coordinates are algebraic integers of $\mathbb Q$ of height at most $H$.
\end{lemma}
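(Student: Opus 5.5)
We construct, for $n=2k+1$, a large explicit family of multiplicatively dependent $n$-tuples of rank exactly $n-1$. The family is built from the $2k$-tuples counted in Lemma~\ref{lem:card_lem}, with one extra coordinate appended that is chosen to preserve dependence while keeping the rank at $n-1$. Concretely, fix a prime $q$ (say $q=2$) and take $\gamma_i=1$ with $T=H$ in Lemma~\ref{lem:card_lem}. This gives $\gg H^k(\log H)^{(k-1)^2}$ solutions of
\[
a_1\cdots a_k=b_1\cdots b_k, \qquad 1\le a_i,b_i\le H,\qquad \gcd(a_ib_i,q)=1 .
\]
To each solution we attach the $n$-tuple $(a_1,\ldots,a_k,b_1,\ldots,b_k,q)$. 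It is multiplicatively dependent through the relation $a_1\cdots a_k(b_1\cdots b_k)^{-1}q^0=1$. Since every entry is a positive rational integer at most $H$, each coordinate has height at most $H$ (for $H\ge q$).

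Next we check that the rank is $n-1$, which here means the multiplicative relations form a rank-one lattice. The last coordinate $q$ is coprime to all the others, so $q$ cannot occur in any relation: comparing $q$-adic valuations forces its exponent to be $0$. The task therefore reduces to showing that, for most of the counted solutions, the $2k$-tuple $(a_1,\ldots,b_k)$ has a relation lattice of rank exactly one, generated by the obvious relation.

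Tuples with extra relations arise in two ways. First, there are degenerate cases in which some $a_i=b_j$ or some entry equals $1$. Second, there are tuples admitting a second independent relation among the $a_i,b_j$. Rather than proving these are absent, we show they are negligible and discard them. The plan is to use \eqref{eq:bound LnH}-type counting, i.e. \cite[Proposition~1.3]{PSSS}: a second independent relation means some sub-configuration of $2k$ integers up to $H$ satisfies two independent relations. Eliminating one variable between the two relations yields a dependent tuple of length $2k-1$ (or a shorter dependent sub-tuple). Such tuples, together with the free remaining coordinate, number at most $H^{1+\lfloor(2k-1)/2\rfloor+o(1)}=H^{k+o(1)}$. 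This is borderline, and it is the main obstacle.

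For this step I would first try to refine the count of exceptions as follows. A second relation combined with $\prod a_i=\prod b_i$ forces a multiplicatively dependent proper sub-tuple of maximal rank of length at most $2k-1$. Tuples containing such a sub-tuple number $\ll H^{k-1/2}$ or $H^{k-1+o(1)}$ after a careful parity accounting, for example by treating the $j$-element dependent sub-tuple as contributing $H^{\lfloor j/2\rfloor+o(1)}$ and the $2k-j$ free coordinates only through the constraint $\prod a_i=\prod b_i$, which saves roughly a factor $H^{1/2}$. If this refinement resists, I would fall back on restricting to a thinner but still large subfamily: require $a_1,\ldots,a_k$ to be pairwise coprime squarefree numbers that are products of distinct primes from disjoint ranges. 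Then each $b_j$ is forced to be a product of subsets of those primes, and the exponent lattice can be computed directly from prime valuations, giving rank one outright. The lower bound would then follow from the asymptotic count in Lemma~\ref{lem:card_lem} minus an $o(H^k(\log H)^{(k-1)^2})$ exceptional set, with $c$ taken small enough to cover small $H$.
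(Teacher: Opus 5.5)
There is a genuine gap: you have misread ``rank $n-1$''. In this paper (and in \cite{PSSS}, where the notation comes from) it means maximal rank, as stated just before the lemma: every proper sub-tuple is multiplicatively independent. Equivalently, every nontrivial relation $\prod h_j^{e_j}=1$ has all $e_j\ne 0$. A relation lattice of rank one is necessary for this but not sufficient. Your tuples $(a_1,\ldots,a_k,b_1,\ldots,b_k,q)$ fail the condition by construction. Your own observation that the exponent of $q$ is forced to be $0$ says exactly that the proper sub-tuple $(a_1,\ldots,b_k)$ is dependent, so none of your tuples is counted by $L_{n,\mathbb{Q},n-1}(H)$. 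For $k=1$, the case behind Corollary~\ref{cor:MH vs LH}, Lemma~\ref{lem:card_lem} forces $a_1=b_1$, so your family is $(a,a,q)$. These are precisely the trivial triples that the maximal-rank condition is designed to exclude. The exceptional-set part is also unsound on its own terms. First, \eqref{eq:bound LnH} loses a factor $H^{o(1)}$, which a main term $H^k(\log H)^{(k-1)^2}$ cannot absorb. Second, the claimed bound $H^{k-1/2}$ is false, since tuples with $a_1=b_1$ alone number $\asymp H^k(\log H)^{(k-2)^2}$.

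What is missing is a way to force every coordinate, including the extra one, into every relation. The paper does this by tagging entries with fixed auxiliary primes. It takes $h_1=6p_2\cdots p_k a_1$, $h_{k+1}=2q_2\cdots q_k b_1$, $h_{2k+1}=3$, and $h_i=q_ia_i$, $h_{k+i}=p_ib_i$ for $2\le i\le k$. Here $a_1\cdots a_k=b_1\cdots b_k$ is coprime to $6p_2q_2\cdots p_kq_k$. Each of $2,3,p_i,q_i$ divides exactly two of the $h_j$. Comparing valuations at these primes in $\prod h_j^{e_j}=1$ gives $e_{k+1}=e_{k+i}=e_{2k+1}=-e_1$ and $e_i=-e_{k+1}=e_1$. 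Thus every relation is a multiple of $(1,\ldots,1,-1,\ldots,-1)$. All proper sub-tuples are therefore independent for every admissible choice of the $a_i,b_i$, with no exceptional set to remove. The count then follows from Lemma~\ref{lem:card_lem} with modulus $6p_2q_2\cdots p_kq_k$, $\gamma_i=1$ and $T\asymp H$. For $k=1$ one simply counts $a_1=b_1\le T$ coprime to $6$.
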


\begin{proof}
We construct the following vector. Fix $2k$ distinct odd primes $p_2,\ldots ,p_k,q_2,\ldots ,q_k \ge 5$. Additionally, we also fix $P=3$ to be a distinct odd prime. Let $a_1,\ldots ,a_k,b_1,\ldots ,b_k$ be positive integers.

We define the vector $(h_1,\ldots ,h_{2k+1})$ such that:
\begin{align*}
    h_1     = 2 \cdot 3 &p_2 \ldots p_k a_1,  \qquad  h_{k+1} = 2q_2 \ldots q_k b_1,  \qquad  h_{2k+1} = 3 \\
    h_i     &= q_i a_i ,      \qquad    h_{k+i} = p_i b_i,   \qquad  \quad \text{for} \ 
    i=2,\ldots ,k, 
\end{align*}
with some positive  integers  
   \begin{equation} \label{eq:bound ab}
a_1,\ldots ,a_k,b_1,\ldots ,b_k \le \frac{H}{2 \cdot 3p_2 \ldots p_k  q_2 \ldots q_k}
    \end{equation}
with 
   \begin{equation} \label{eq:cond ab}
a_1\ldots a_k=b_1\ldots b_k \quad \text{and}\quad  \gcd\(a_1\ldots a_k,  6 p_2q_2\ldots p_kq_k\)=1.
    \end{equation}

Clearly 
\[h_1\ldots h_kh_{k+1}^{-1}\ldots h_{2k+1}^{-1}=1.
\]
We also notice the following:
\begin{itemize}
    \item the prime $3$ only appears in $h_1$ and $h_{2k+1}$;
    \item each prime $p_i$ appears in $h_1$ and $h_{k+i}$ for $2 \le i \le k$;  
    \item each prime $q_i$ appears in $h_{i}$ and $h_{k+1}$ for $2 \le i \le k$.
\end{itemize}
This implies that $(h_1,\ldots ,h_{2k+1})$ are multiplicatively dependent of maximal rank. 

Using Lemma~\ref{lem:card_lem} to count the number of choices for 
\[
a_1,\ldots ,a_k,b_1,\ldots ,b_k \in \N^{2k}
\]
with~\eqref{eq:bound ab} and~\eqref{eq:cond ab}, we obtain  the desired result. 
\end{proof}

\section{Proof of Theorem~\ref{thm:Bound MH}}

\subsection{Preliminaries}

To estimate $M(H)$, it is enough to count  integer points with positive 
coordinates of size at most $H$ on the family of varieties, given by 
equations
\begin{equation}
\label{eq:SystEq Init}
X^{k_1}  Y^{m_1} Z^{n_1} =  1 \mand (X+1)^{k_2}  (Y+1)^{m_2} (Z+1)^{n_2}  = 1, 
\end{equation}
where by Lemma~\ref{lem:mult_dep_expon} we are only interested in 
non-zero exponents 
\begin{equation}
\label{eq: kmn}
1 \le k_{\nu}, m_{\nu}, n_{\nu} \ll (\log H)^2, \qquad \nu =1,2. 
\end{equation}

We now make several observations about the exponents~\eqref{eq: kmn}. 

First, since we are only interested in integer positive solutions, for each $\nu=1,2$ the exponents $k_{\nu}, m_{\nu}, n_{\nu}$ cannot be of the same sign. Hence, interchanging the names of the variables  and also changing, if necessary 
\[ 
(k_{\nu}, m_{\nu}, n_{\nu}) \to (-k_{\nu}, -m_{\nu}, -n_{\nu}), \quad \nu =1,2
\]
we can reduce~\eqref{eq:SystEq Init} to 
\begin{equation}
\label{eq:SystEq-1}
Z^{n_1}  - X^{k_1}  Y^{m_1} =  0 \quad \text{and} \quad
(Z+1)^{n_2} - (X+1)^{k_2}  (Y+1)^{m_2} = 0 , 
\end{equation}
where 
\[
k_1,m_1,k_2, n_1, n_2  > 0. 
\]

Thus there are now two cases to consider: when
\[
 m_2 > 0 
\]
in which case~\eqref{eq:SystEq-1} is a  system of polynomial equations, and when
\[
 m_2 <0 
\]
in which case, changing $m_2 \to - m_2$, we rewrite~\eqref{eq:SystEq-1} as a  
slightly different system of polynomial equations. 
\begin{equation}
\label{eq:SystEq-2}
Z^{n_1}  - X^{k_1}  Y^{m_1} =  0 \quad \text{and} \quad
(Z+1)^{n_2}  (Y+1)^{m_2} - (X+1)^{k_2} = 0 , 
\end{equation}

Next, we define the polynomials
\begin{equation}
\label{eq:Polys PQR}
\begin{split} 
&P_{X,Y}(Z) = Z^{n_1}  - X^{k_1}  Y^{m_1} , \\
&Q_{X,Y}(Z) = (Z+1)^{n_2} - (X+1)^{k_2} (Y+1)^{m_2} ,\\ 
&R_{X,Y}(Z) = (Z+1)^{n_2} (Y+1)^{m_2} - (X+1)^{k_2},
\end{split} 
\end{equation}
which appear in~\eqref{eq:SystEq-1} and~\eqref{eq:SystEq-2}, 

Taking the resultants (with respect to $Z$)
 \begin{align*}
&F_\vt(X,Y) = \Res_Z\(P_{X,Y}(Z) , Q_{X,Y}(Z)  \),\\
&G_\vt(X,Y) = \Res_Z\(P_{X,Y}(Z) , R_{X,Y}(Z)  \),
\end {align*}
of the polynomials in~\eqref{eq:SystEq-1} and~\eqref{eq:SystEq-2}, 
respectively, where 
\begin{equation}
\label{eq:vector t}
\vt = (k_1, m_1, n_1, k_2, m_2, n_2)\in \N^6
\end{equation}
we are led to estimating the number 
of integer positive points on the curves $F_\vt(X,Y) = 0$ and $G_\vt(X,Y) = 0$
(after which the value of $Z \in \N$ satisfying~\eqref{eq:SystEq-1} 
or~\eqref{eq:SystEq-2}  is uniquely defined). 

We remark that the above transformations of the original system of equations~\eqref{eq:SystEq Init} may lead to losing $O(1)$ positive solutions, which is irrelevant for our estimate on their total number.

In order to apply  Lemma~\ref{lem:UnifBombPila} to estimate the number of such integer points, we  show that even if the resultants  $F_\vt(X,Y) $ and  $G_\vt(X,Y)$ may have linear factors (over $\C$), they  have only $O(1)$ possible solutions $(X, Y,Z) \in \N$ that 
may correspond to multiplicatively dependent triples of maximal rank.  

We also observe that using~\eqref{eq: kmn} we see that 
\begin{equation}
\label{eq:def Ft Gt}
\deg F_\vt, \deg G_\vt \le \max\{k_1, m_1, n_1, k_2, m_2, n_2\}^2  \ll (\log H)^4. 
\end{equation}

\subsection{Linear factors of $F_\vt(X,Y)$} 
\label{sec:lin fact F}

Let 
\[
L(X,Y)=uX+vY+w \mid F_\vt(X,Y),
\]
for some $u,v,w\in \C$, $(u,v) \ne (0,0)$.

Since in this case the roles of $X$ and $Y$ are symmetric, we assume that $v \ne 0$. 
Hence we can assume that $v=1$ and thus $L(X,Y)=uX+Y+w$. 
If $u \notin \R$ (and thus $u \ne 0$) then we can immediately see that $L(X,Y)$ 
vanishes at most at one point $(X,Y) \in \N^2$. 
Indeed if $uX_1+Y_1+w = uX_2+Y_2+w = 0$ then we see that $u(X_1-X_2) = Y_1-Y_2$ and hence we conclude that $u \in \R$ and $w \in \R$. 
In turn, this one possible point contributes at most $1$ 
to our counting function $M(H)$. 

Clearly linear forms with $u \in \R$ and $w \notin \R$ do not vanish on $(X,Y) \notin \R^2$ 
at all. Hence, we can now assume $u,w \in \R$. 

Specialising $(X,Y) = (0, -w)$ 
we see that the polynomials $P_{0,-w}(Z) = Z^{n_1}$ and
$Q_{0,-w}(Z) = (Z+1)^{n_2} -  (-w+1)^{m_2}$ have common roots, which clearly 
can only be $Z = 0$. Thus, $(-w+1)^{m_2} = 1$, which for $w \in\R$ immediately 
implies that either $w = 0$ or $w = 2$. 

First, assume that $w=0$ and thus $L(X,Y) =  uX +Y$.
Specialising $(X,Y) = (-1, u)$ 
we see that the polynomials $P_{0,u}(Z) = Z^{n_1} -(-1)^{k_1} u^{m_1} $ and
$Q_{0,u}(Z) = (Z+1)^{n_2}$ have common roots, which this time 
can only be $Z = -1$, which leads to $u = \pm 1$. Thus in this case $L(X,Y) = \pm X +Y$
which either does not have any zeros in $\N^2$ or has zeros with $X=Y$, which 
obviously do not lead to multiplicatively dependent triples of maximal rank.  
Hence, in this case, we do not get any contribution to $M(H)$. 

Now, we consider the case where $w=2$ and thus $L(X,Y) =  uX +Y+2$. Consider the specialisation $(X,Y) =  (-1,u-2)$.   The only root of  
$Q_{-1, u-2}(Z)=(Z+1)^{n_2}$ is $Z=-1$. Since this is also a root of 
 $P_{-1,u-2}(Z)=Z^{n_1} - (-1)^{k_1}(u-2)^{m_1}$, we derive $u-2 = \pm 1$. 
However the corresponding linear forms  $X+Y+2$ and  $3X+Y+2$ do not vanish on any $(X,Y) \in \N^2$.

Thus, we see that zeros of linear factors of $F_\vt(X,Y)$ may produce at most one 
pair of consecutive multiplicatively dependent triples of maximal rank.

\subsection{Linear factors of $G_\vt(X,Y)$} 
\label{sec:lin fact G}

Let 
\[
L(X,Y)=uX+vY+w \mid G_\vt(X,Y),
\]
for some $u,v,w\in \C$, $(u,v) \ne (0,0)$.

After an appropriate scaling, we can assume that $w \in \R$. 

If $v=0$ then  $u \ne 0$ we see that $P_{-w/u, Y}(Z) = Z^{n_1}$ and
$R_{-w/u, Y}(Z) =  (Z+1)^{n_2} (Y+1)^{m_2} - (-w/u+1)^{k_2}$ have a common root for any
$Y$, which is clearly impossible. 

Hence we can assume that $v \ne 0$ and thus that $L(X,Y)=uX + Y+w$. 
Furthermore,  the same argument as in Section~\ref{sec:lin fact F} allows us to claim that 
up to a contribution of at most one extra triple, we only need to consider the case $u \in \R$. 

Specialising $(X,Y) = (0, -w)$ 
we see that the polynomials 
\[
P_{0,-w}(Z) = Z^{n_1} \mand R_{0,-w}(Z) = (Z+1)^{n_2} (-w+1)^{m_2}-1
\] 
have common roots, which clearly can only be $Z = 0$. Thus $(-w+1)^{m_2} = 1$, 
and as in Section~\ref{sec:lin fact F} we conclude that either $w = 0$ or $w = 2$. 

Furthermore, exactly as in  Section~\ref{sec:lin fact F}, specialisations 
$(X,Y) = (-1, u)$  and $(X,Y) =  (-1,u-2)$ allow us to rule out these possibilities.

\subsection{Concluding the proof}

We observe that our analysis of linear factors of $F_\vt(X,Y)$ and $G_\vt(X,Y)$ 
in  Sections~\ref{sec:lin fact F} and~\ref{sec:lin fact G}, respectively allows us to claim 
that 
\begin{equation}
\label{eq: MH vs TH}
M(H) \le T(H) + O(1),
\end{equation}
where $T(H)$ is the total number of zeros $(a,b.c)$ in the box $[1,H]^3$  of  all non-linear absolutely irreducible factors of $F_\vt(X,Y)$ and $G_\vt(X,Y)$ taken over all
$O((\log H)^{12})$ admissible values of the vector of exponents $\vt$ 
as in~\eqref{eq:vector t}.

Hence, invoking Lemma~\ref{lem:UnifBombPila} we see that 
\begin{equation}
\label{eq: Bound TH}
T(H) \le  (\log H)^{O(1)} \max_{2\le d \le D} d^2H^{1/d}
\ll H^{1/2}  (\log H)^{O(1)}.
\end{equation}
for some $D \ll (\log H)^4$, see~\eqref{eq:def Ft Gt}.  
Indeed, for $d = 2$ this is obvious,  while for $3 \le d \le D$ we have 
\[
d^2H^{1/d} \le D^2H^{1/3} \ll  H^{1/3}  (\log H)^{8} \ll H^{1/2}.
\]

Now a combination of~\eqref{eq: MH vs TH} and of~\eqref{eq: Bound TH}
concludes the proof. 

\section{Open questions} 

We do not believe that the bound of Theorem~\ref{thm:Bound MH} is tight and perhaps investigating possible quadratic factors of the resultants $F_\vt(X,Y)$ and $G_\vt(X,Y)$ is one of the ways to improve our bound. This, however, seems to be much more difficult than studying linear factors. In fact, it is not even clear whether $M(H) \to \infty$ as $H\to \infty$, see~\cite[Question~4]{VuZi}. Numerical calculations, presented in~\cite[Section~5]{VuZi}, imply that  $M(1000) = 3!\cdot 13 = 78$. 

It is certainly interesting to estimate the number 
of integer triples $(a,b,c) \in [1,H]^3$ such that for some integers $r,s,t$ (not necessarily positive) with 
$(r,s,t) \ne (0,0,0)$ the triples  $(a,b,c)$ and  $(a+r,b+s,c+t)$ are both
multiplicatively dependent of maximal rank. In principle, our approach applies, however,
our investigation of linear factors or resultants in Sections~\ref{sec:lin fact F} and~\ref{sec:lin fact G} relies on the specific choice $r=s=t =1$. In fact, we are interested in an argument which is reasonably uniform with respect to $r$, $s$ and $t$ and thus may allow us to take 
them growing together with $H$. This has a natural interpretation of gaps between 
multiplicatively dependent triples of maximal rank, see~\cite{KSSS} for related results.

Higher-dimensional analogues are certainly of strong interest as well.  

\section*{Acknowledgement}

During the preparation of this work, I.S. was supported by the ARC Grants DP230100530 and DP230100534.

\end{document}